\begin{document}

\title*{Lower Error Bounds for Randomized Multilevel
and Changing Dimension Algorithms}
\author{Michael Gnewuch}
\institute{Michael Gnewuch, School of Mathematics and Statistics,
University of New South Wales, Sydney, NSW, 2052, Australia,
\email{michael.gnewuch@unsw.edu.au}}
%
%
\maketitle

\abstract{We provide lower error bounds for randomized algorithms that approximate integrals of functions depending on an unrestricted or even infinite number of variables. More precisely, we consider the infinite-dimensional integration problem on weighted Hilbert spaces with an underlying anchored decomposition and arbitrary weights. We focus on randomized algorithms and the randomized 
worst case error. 
We study two
cost models for function evaluation which depend on the number of active variables of the chosen sample points. Multilevel algorithms behave very well with respect to the first cost model, while changing dimension algorithms and also dimension-wise quadrature methods, which are based on a similar idea, can take advantage of the more generous second cost model. 
We prove the first non-trivial lower error bounds for randomized algorithms in these cost models and demonstrate their quality in the case of product weights. 
In particular, we show that the randomized changing dimension algorithms provided in [L.~Plaskota, G.~W.~Wasilkowski, J.~Complexity 27 (2011), 505--518] 
achieve convergence rates arbitrarily close to the optimal convergence rate.}

\section{Introduction}

Integrals over functions with an unbounded or infinite number of variables are 
important in physics, quantum chemistry or in quantitative finance, see, e.g.,
\cite{Gil08a, WW96} and the references therein. In the last few years a large amount
of research was dedicated to design new algorithms as, e.g., multilevel and changing dimension
algorithms or dimension-wise quadrature methods, to approximate such integrals efficiently.
Multilevel algorithms were introduced by Heinrich in \cite{Hei98} in the context of integral equations and by Giles in \cite{Gil08a} in the context of stochastic differential equations.
Changing dimension algorithms were introduced by Kuo et al. in \cite{KSWW10} in the context of infinite-dimensional integration on weighted Hilbert spaces and dimension-wise quadrature 
methods were introduced by Griebel and Holtz in \cite{GH10} for multivariate integration;
changing dimension and dimension-wise quadrature algorithms are based on a similar idea.

Here we want to study the complexity of numerical integration on a weighted Hilbert
space of functions with infinitely many variables as it has been done in 
\cite{HW01, KSWW10, HMNR10, NHMR11, Gne10, PW11, B10, BG12, DG12}. The Hilbert space we consider here allows for a so-called
anchored function space decomposition. 
For a motivation of this specific function space setting and connections to problems in stochastics
and mathematical finance see, e.g., \cite{HMNR10,NHMR11}. 
We derive lower error bounds for randomized algorithms
to solve the infinite-dimensional integration problem. Notice that the complexity of integration problems is less well understood in the randomized setting than in the deterministic setting (where
only deterministic algorithms are permitted and the deterministic worst case error is considered), see, e.g., the comments in \cite[p.~487]{NW10}.

Our error bounds are for the randomized worst case error and are expressed in terms
of the cost of a randomized algorithm. Here we solely take account of function evaluations, i.e., the
cost of function sampling, and disregard other cost as, e.g., combinatorial cost. Notice that this makes the statements of our lower bounds only stronger. To evaluate the cost of sampling, we 
consider two sampling models: the nested subspace sampling model (introduced in \cite{CDMR09}, where it was called variable subspace sampling model) and the unrestricted subspace sampling model
(introduced in \cite{KSWW10}). Our lower error bounds are the first non-trivial lower
bounds in these settings, cf. also the comments in the introductions of \cite{HMNR10, PW11}. Due to space restrictions, we do not provide new constructive upper error bounds. 
For the same reason we refer for a formal definition of multilevel algorithms and changing dimension algorithms for the
infinite-dimensional integration problem on weighted Hilbert spaces to 
\cite{HMNR10, NHMR11, Gne10} and \cite{KSWW10,PW11}, respectively.
In this article we only compare our lower bounds to already known upper bounds. In particular, we show that
the randomized changing dimension algorithms provided  for product weights in \cite{PW11}
achieve convergence rates arbitrarily close to the optimal rate of convergence.

Let us mention that similar general lower error bounds for 
infinite-dimensional integration on weighted Hilbert spaces are provided in 
\cite{DG12} in the determistic setting for the anchored decomposition and in \cite{BG12}
in the randomized setting for underlying ANOVA-type decompositions (to treat the latter decompositions, a technically more involved analysis is necessary).

The article is organized as follows: In Section \ref{TGS} the setting we want to study is
introduced. In Section \ref{LB} we prove new lower bounds for the complexity of randomized algorithms for solving the infinite-dimensional integration problem on weighted Hilbert spaces.
In Section \ref{WEIGHTS} we provide the most general form of our lower bounds which is valid 
for arbitrary weights. In Section \ref{SPECIFIC} we state the simplified form of our lower
bounds for specific classes of weights. In particular, we show in Section \ref{PRODUCT} that 
the randomized changing dimension algorithms from \cite{PW11} are essentially optimal.

\section{The general setting}
\label{TGS}

In this section we describe the precise setting we want to study. 
A comparison with the (slightly different) settings described in the 
papers \cite{Gne10,KSWW10,PW11} will be provided in the forthcoming paper \cite{GMR12}; we refer 
to the same paper and to \cite{HW01, HMNR10} for rigorous proofs of the statements on the 
function spaces we consider here.

\subsection{Notation}

For $n\in\mathbb{N}$ we denote the set $\{1,\ldots,n\}$ by $[n]$.
If $u$ is a finite set, then its size is denoted by $|u|$.
We put $\mathcal{U} := \{u\subset \mathbb{N} \,|\, |u|<\infty \}$.
We use  the common Landau symbol $O$, and additionally for non-negative 
functions
$f,g: [0,\infty) \to [0,\infty)$ the notation 
$f=\Omega(g)$ if 
$g= O(f)$.

\subsection{The function spaces}

As spaces of integrands of infinitely many variables, we consider
\emph{reproducing kernel Hilbert spaces};
our standard reference for these spaces is \cite{Aro50}.

We start with univariate functions. Let $D\subseteq \mathbb{R}$ be a Borel
measurable set of
$\mathbb{R}$ and let $k:D\times D\to \mathbb{R}$ be a measurable reproducing kernel
with anchor $a\in D$, i.e., $k(a,a) = 0$. This implies
$k(\cdot,a) \equiv 0$. We assume that $k$ is non-trivial,
i.e., $k\neq 0$.
We denote the reproducing kernel Hilbert space with kernel $k$ by
$H = H(k)$ and its scalar product and norm by $\langle \cdot, \cdot \rangle_H$ and
$\|\cdot\|_H$, respectively. Additionally, we denote its norm unit ball
by $B(k)$. We use corresponding notation for other 
reproducing kernel Hilbert spaces. If $g$ is a constant function in $H$,
then the reproducing property implies 
$g = g(a) = \langle g, k(\cdot,a) \rangle_H = 0$.
Let $\rho$ be a probability measure on $D$. We assume that 
\begin{equation}
\label{cond-M}
 M:= \int_D k(x,x) \,\rho({\rm d}x) <\infty.
\end{equation}   
For arbitrary ${\bf x},{\bf y} \in D^\mathbb{N}$ and $u\in \mathcal{U}$ we define
\begin{equation*}
k_u({\bf x},{\bf y}) := \prod_{j\in u} k(x_j, y_j),
\end{equation*}
where by convention $k_{\emptyset} \equiv 1$.
The Hilbert space with reproducing kernel $k_u$ will be denoted
by $H_u = H(k_u)$.
Its functions depend only on the coordinates $j\in u$. If it is convenient for us,
we identify $H_u$ with the space of functions defined on $D^u$ determined by
the kernel $\prod_{j\in u} k(x_j, y_j)$, and write $f_u({\bf x}_u)$ instead of $f_u({\bf x})$
for $f_u\in H_u$, ${\bf x}\in D^{\mathbb{N}}$, and ${\bf x}_u := (x_j)_{j\in u} \in D^u$.
For all $f_u\in H_u$ and ${\bf x}\in D^{\mathbb{N}}$ we have
\begin{equation}
\label{vanish}
f_u({\bf x}) = 0
\hspace{3ex}\text{if $x_j=a$ for some $j\in u$.}
\end{equation}
This property yields an \emph{anchored decomposition} of functions,
see, e.g., \cite{KSWW10a}.

Let now ${\bf \gamma} = (\gamma_u)_{u\in \mathcal{U}}$ be weights, i.e.,
a family of non-negative numbers. We assume 
\begin{equation}
 \label{summable}
\sum_{u\in\mathcal{U}} \gamma_u M^{|u|} <\infty.
\end{equation}
Let us define the domain $\mathcal{X}$ of functions of infinitely many variables by
\begin{equation*}
\mathcal{X}:= \left\{{\bf x} \in D^{\mathbb{N}} \,\Bigg|\, \sum_{u\in \mathcal{U}} \gamma_u 
k_u(x_j,x_j) <\infty \right\}.
\end{equation*}
Let $\mu$ be the infinite product probability measure of $\rho$ on $D^{\mathbb{N}}$.
Due to our assumptions we have $\mu(\mathcal{X}) = 1$, see \cite[Lemma~1]{HMNR10}
or \cite{GMR12}.
We define
\begin{equation*}
K_{{\bf \gamma}}({\bf x},{\bf y}) := \sum_{u\in\mathcal{U}} \gamma_u k_u({\bf x},{\bf y})
\hspace{2ex}\text{for all ${\bf x},{\bf y} \in\mathcal{X}$.}
\end{equation*}
$K_{{\bf \gamma}}$ is well-defined and, since $K_{{\bf \gamma}}$ is symmetric and positive
semi-definite, it is a reproducing
kernel on $\mathcal{X} \times \mathcal{X}$, see \cite{Aro50}. We denote the corresponding
reproducing kernel Hilbert space by $\mathcal{H}_{{\bf \gamma}} = H(K_{{\bf \gamma}})$, its norm by $\|\cdot\|_{{\bf \gamma}}$, and its norm unit ball by 
$B_{{\bf \gamma}}=B(K_{{\bf \gamma}})$.
For the next lemma see \cite[Cor.~5]{HW01} or \cite{GMR12}.

\begin{lemma}
\label{Lemma6}
The space $\mathcal{H}_{{\bf \gamma}}$ consists of all functions
$f=\sum_{u\in\mathcal{U}} f_u$, $f_u\in H_u$, that have a finite norm
\begin{equation*}
\|f\|_{{\bf \gamma}} = 
\left(\sum_{u\in\mathcal{U}} \gamma^{-1}_u \|f_u\|^2_{H_u} \right)^{1/2}.
\end{equation*}
\end{lemma}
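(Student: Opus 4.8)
The plan is to exhibit a concrete Hilbert space $\mathcal{G}$ of functions on $\mathcal{X}$, equipped with the candidate inner product $\langle f,g\rangle_{\mathcal{G}} := \sum_{u\in\mathcal{U}} \gamma_u^{-1}\langle f_u,g_u\rangle_{H_u}$ (with the convention that a summand with $\gamma_u=0$ forces $f_u=0$), and then verify directly that $K_{{\bf \gamma}}$ is its reproducing kernel. Since the reproducing kernel determines the Hilbert space uniquely, this identifies $\mathcal{G}$ with $\mathcal{H}_{{\bf \gamma}}$ and simultaneously yields the norm formula.

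First I would establish the structural fact that makes the formula an exact identity rather than an infimum over competing decompositions (as in Aronszajn's general theorem on sums of kernels): the decomposition $f=\sum_{u} f_u$, $f_u\in H_u$, is unique. Suppose $\sum_{u} f_u\equiv 0$. Arguing by induction on $|u|$ and using the anchoring property (\ref{vanish}), fix $u$ and evaluate at a point whose coordinates outside $u$ all equal the anchor $a$; then every summand $f_v$ with $v\not\subseteq u$ vanishes, and by the inductive hypothesis every $f_v$ with $v\subsetneq u$ is already zero, leaving $f_u\equiv 0$ on the relevant slice and hence on all of $D^u$. This uniqueness makes $\langle\cdot,\cdot\rangle_{\mathcal{G}}$ well defined.

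Next I would check that $\mathcal{G}$ is a genuine Hilbert space of functions on $\mathcal{X}$. For ${\bf x}\in\mathcal{X}$ the Cauchy--Schwarz inequality in $H_u$ gives $|f_u({\bf x})|\le \|f_u\|_{H_u}\sqrt{k_u({\bf x},{\bf x})}$, and a second Cauchy--Schwarz over $u$ yields
\begin{equation*}
\sum_{u} |f_u({\bf x})| \le \Big(\sum_{u} \gamma_u^{-1}\|f_u\|_{H_u}^2\Big)^{1/2}\Big(\sum_{u} \gamma_u k_u({\bf x},{\bf x})\Big)^{1/2},
\end{equation*}
which is finite by the definition of $\mathcal{X}$. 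Thus $f({\bf x})=\sum_{u} f_u({\bf x})$ converges absolutely and $|f({\bf x})|\le \|f\|_{\mathcal{G}}\sqrt{K_{{\bf \gamma}}({\bf x},{\bf x})}$, so point evaluations are bounded. Completeness follows because, by the uniqueness above, the map $f\mapsto (f_u)_{u\in\mathcal{U}}$ is an isometry of $\mathcal{G}$ onto the orthogonal direct sum of the rescaled spaces $(H_u,\gamma_u^{-1}\langle\cdot,\cdot\rangle_{H_u})$, which is complete as an $\ell^2$-direct sum of Hilbert spaces.

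Finally I would verify the reproducing property. Fix ${\bf y}\in\mathcal{X}$. Since $k_u(\cdot,{\bf y})\in H_u$ with $\|k_u(\cdot,{\bf y})\|_{H_u}^2=k_u({\bf y},{\bf y})$, the unique decomposition of $K_{{\bf \gamma}}(\cdot,{\bf y})=\sum_{u}\gamma_u k_u(\cdot,{\bf y})$ has $u$-component $\gamma_u k_u(\cdot,{\bf y})$, and $\sum_{u} \gamma_u^{-1}\|\gamma_u k_u(\cdot,{\bf y})\|_{H_u}^2=\sum_{u}\gamma_u k_u({\bf y},{\bf y})=K_{{\bf \gamma}}({\bf y},{\bf y})<\infty$, so $K_{{\bf \gamma}}(\cdot,{\bf y})\in\mathcal{G}$. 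Then for $f=\sum_{u} f_u\in\mathcal{G}$,
\begin{equation*}
\langle f, K_{{\bf \gamma}}(\cdot,{\bf y})\rangle_{\mathcal{G}}=\sum_{u}\gamma_u^{-1}\langle f_u,\gamma_u k_u(\cdot,{\bf y})\rangle_{H_u}=\sum_{u}\langle f_u,k_u(\cdot,{\bf y})\rangle_{H_u}=\sum_{u} f_u({\bf y})=f({\bf y}).
\end{equation*}
Hence $K_{{\bf \gamma}}$ reproduces $\mathcal{G}$, so $\mathcal{G}=H(K_{{\bf \gamma}})=\mathcal{H}_{{\bf \gamma}}$ with identical norms. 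I expect the main obstacle to be the uniqueness of the anchored decomposition: it is exactly the ingredient that upgrades the general Aronszajn infimum over decompositions to the stated closed-form norm, and it is the only step where the anchor condition $k(a,a)=0$ is genuinely used.
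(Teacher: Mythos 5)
Your proposal is correct. Note that the paper does not prove this lemma at all --- it simply cites \cite[Cor.~5]{HW01} and the forthcoming \cite{GMR12} --- so there is no in-paper argument to compare against line by line; what you have written is the standard (and, as far as I can tell, complete) proof. Your three steps all check out: the anchored points $({\bf x}_u;{\bf a})$ used in the uniqueness induction do lie in $\mathcal{X}$ (only finitely many coordinates differ from $a$, so the defining sum is finite), the double Cauchy--Schwarz estimate gives absolute convergence of $\sum_u f_u({\bf x})$ and boundedness of point evaluations, surjectivity of $f\mapsto(f_u)_u$ onto the $\ell^2$-direct sum together with the uniqueness gives completeness, and the computation of $\langle f,K_{{\bf \gamma}}(\cdot,{\bf y})\rangle_{\mathcal{G}}$ is legitimate by the same absolute convergence. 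You also correctly identify the one place the anchor hypothesis $k(a,a)=0$ is essential: Aronszajn's general theorem for a sum of kernels only yields the norm as an infimum over all decompositions, and it is precisely the uniqueness of the anchored decomposition (forced by property (\ref{vanish})) that collapses that infimum to the stated closed form. The only cosmetic caveat is the handling of indices with $\gamma_u=0$, which your stated convention ($f_u=0$ there, summand read as $0$) resolves consistently with the finite-norm condition in the lemma.
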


For $u\in \mathcal{U}$ let $P_u$ denote the orthogonal projection
$P_u:\mathcal{H}_{{\bf \gamma}} \to H_u$, $f\mapsto f_u$ onto $H_u$. Then each $f\in \mathcal{H}_{{\bf \gamma}}$
has a unique representation
\begin{equation*}
f = \sum_{u\in \mathcal{U}} f_u
\hspace{2ex}\text{with $f_u = P_u(f) \in H_u$, $u\in \mathcal{U}$.}
\end{equation*}

\subsection{Infinite-dimensional integration}
\label{IDI}

For a given $f\in\mathcal{H}_{{\bf \gamma}}$ we want
to approximate the integral
\begin{equation*}
I(f) := \int_{\mathcal{X}} f({\bf x})\,\mu({\rm d}{\bf x}).
\end{equation*}
Due to (\ref{summable}), $I$ is continuous on
$\mathcal{H}_{{\bf \gamma}}$ and its representer $h\in \mathcal{H}_{{\bf \gamma}}$ is given by
\begin{equation*}
h({\bf x}) = \int_{\mathcal{X}} K_{{\bf \gamma}}({\bf x},{\bf y}) \mu({\rm d}{\bf y}).
\end{equation*}
The operator norm of the integration functional $I$ is given by
\begin{equation}
\label{bedingung}
\|I\|^2_{{\mathcal{H}_{{\bf \gamma}}}} = \|h\|_{{\bf \gamma}}^2 
= \sum_{u\in \mathcal{U}} \gamma_u C^{|u|}_0 <\infty,
\end{equation}
where 
\begin{equation*}
 C_0:= \int_D\int_D
k(x,y)\,\rho({\rm d}x)\,\rho({\rm d}y).
\end{equation*}
We have $C_0 \le M$. 
We assume that $I$ is non-trivial, i.e.,
that $C_0>0$ and $\gamma_u >0$ for at least one $u\in\mathcal{U}$.
For $u\in\mathcal{U}$ and $f\in\mathcal{H}_{{\bf \gamma}}$ we define $I_u := I\circ P_u$,
i.e., 
\begin{equation*}
I_u(f) = \int_{D^u} f_u({\bf x}_u)  \,\rho^u({\rm d}{\bf x}_u),
\end{equation*}
and the representer $h_u$ of $I_u$ in $\mathcal{H}_{{\bf \gamma}}$ is given by
$h_u({\bf x}) = P_u(h)({\bf x})$.
Thus we have
\begin{equation*}
h = \sum_{u\in\mathcal{U}}  h_u
\hspace{2ex}\text{and}\hspace{2ex}
I(f) = \sum_{u\in\mathcal{U}} I_u(f_u)
\hspace{2ex}\text{for all $f\in\mathcal{H}_{{\bf \gamma}}$.}
\end{equation*}
Furthermore,
\begin{equation}
 \label{hu-norm}
\|h_u\|^2_{{\bf \gamma}} = \gamma_{u}C^{|u|}_0
\hspace{2ex}\text{for all $u\in\mathcal{U}$.}
\end{equation}

\subsection{Randomized algorithms, cost models, and errors}

As in \cite{HMNR10}, we assume that algorithms for approximation of
$I(f)$ have access to the function $f$ via a subroutine (``oracle'')
that provides values $f({\bf x})$ for points ${\bf x}\in D^{\mathbb{N}}$.
For convenience we define $f({\bf x}) = 0$ for ${\bf x}\in D^{\mathbb{N}}\setminus \mathcal{X}$.

We now present the cost models introduced in \cite{CDMR09}
and \cite{KSWW10}: In both models we only consider the cost of function 
evaluations. To define the cost of a function evaluation, we fix a
monotone increasing function $\$(\nu): \mathbb{N}_0 \to [1,\infty]$.
For our lower error bounds we will later assume that $\$(\nu) = \Omega(\nu^s)$
for some $s\ge 0$.
For each $v\in\mathcal{U}$ we define the finite-dimensional affine subspace
$\mathcal{X}_{v,a}$ of $\mathcal{X}$ by 
\begin{equation*}
 \mathcal{X}_{v,a} := \{ {\bf x} \in D^\mathbb{N} \,|\, x_j = a \hspace{1ex}\text{for all}
\hspace{1ex} j\in\mathbb{N}\setminus v\}.
\end{equation*}

In the \emph{nested subspace sampling model} function evaluations can
be done in a sequence of affine subspaces 
 $\mathcal{X}_{v_1,a} \subset \mathcal{X}_{v_2,a} \subset \cdots$
for a strictly increasing sequence ${\rm v} = (v_i)_{i\in\mathbb{N}}$ of sets
$\emptyset \neq v_i \in \mathcal{U}$,
and the cost for each function
evaluation is given by the cost function 
\begin{equation}
\label{varcost}
 c_{{\rm v},a}({\bf x}) := \inf\{ \$(|v_i|) \,|\, {\bf x}\in \mathcal{X}_{v_i,a} \},
\end{equation}
where we use the standard convention that $\inf \emptyset = \infty$.
Let $C_{{\rm nest}}$ denote the set of all cost functions of the
form (\ref{varcost}). The nested subspace sampling model was introduced
in \cite{CDMR09}, where it was actually called ``variable subspace sampling
model''. We prefer the name ``nested subspace sampling model'' to clearly
distinguish this model from the following cost model, which is even more
``variable'':

In the \emph{unrestricted subspace sampling model} we are allowed to sample
in any subspace $\mathcal{X}_{u,a}$, $u\in\mathcal{U}$, without any restriction. The
cost for each function evaluation is given by the cost function
\begin{equation}
\label{unrcost}
 c_{a}({\bf x}) := \inf\{ \$(|u|) \,|\, {\bf x}\in \mathcal{X}_{u,a},\, u\in\mathcal{U} \}.
\end{equation}
Put $C_{{\rm unr}} := \{c_a\}$.
The unrestricted subspace sampling model was introduced in
\cite{KSWW10}, where it did not get a particular name.
Obviously, the unrestricted subspace sampling model is more generous
than the nested subspace sampling model. 


We consider randomized algorithms for integration of functions $f\in \mathcal{H}_{{\bf \gamma}}$.
For a formal definition we refer to \cite{CDMR09,Nov88, TWW88, Was89}.
Here we require that a randomized algorithm $Q$ yields for each $f\in\mathcal{H}_{{\bf \gamma}}$
a square-integrable random variable $Q(f)$. (More precisely, a randomized
algorithm $Q$ is a map $Q:\Omega \times \mathcal{H}_{{\bf \gamma}} \to \mathbb{R}$, 
$(\omega,f) \mapsto Q(\omega,f)$, where $\Omega$ is some suitable 
probability space. But for convenience we will not specify the underlying
probability space $\Omega$ and suppress any reference to $\Omega$ or
$\omega\in \Omega$. We use this convention also for other random variables.)  
Furthermore, we require that the cost of a randomized algorithm $Q$, which is defined to be the sum of the cost of all function
evaluations, is a random variable, which may depend on the function $f$. 
That is why we denote this random
variable by ${\rm cost}_c(Q,f)$, $c$ the relevant cost function from $C_{{\rm nest}}$
or $C_{{\rm unr}}$.

We denote the class of all randomized algorithms for numerical integration
on $\mathcal{H}_{{\bf \gamma}}$ that satisfy the very mild requirements stated above by $\mathcal{A}^{{\rm ran}}$. For unrestricted subspace sampling we additionally
consider a subclass $\mathcal{A}^{{\rm res}}$ of $\mathcal{A}^{{\rm ran}}$. We say that an algorithm
$Q\in\mathcal{A}^{{\rm ran}}$ is in $\mathcal{A}^{{\rm res}}$ if there exist an $n\in\mathbb{N}_0$ and sets
$v_1,\ldots,v_n\in\mathcal{U}$ such that for every $f\in\mathcal{H}_{{\bf \gamma}}$ the algorithms $Q$ performs
exactly $n$ function
evaluations of $f$, where the $i$th sample point is taken from $\mathcal{X}_{v_i,a}$, and
$\mathbb{E}({\rm cost}_{c_a}(Q,f)) = \sum^n_{i=1}\$(|v_i|)$.
If additionally $|v_1|, \ldots, |v_n| \le \omega$ for some $\omega \in\mathbb{N}$, we
say that $Q\in\mathcal{A}^{{\rm res}-\omega}$.
Notice that the classes $\mathcal{A}^{{\rm ran}}$, $\mathcal{A}^{{\rm res}}$, and $\mathcal{A}^{{\rm res}-\omega}$ contain in particular non-linear and adaptive algorithms.


The \emph{worst case cost} of a randomized algorithm $Q$ on a class of integrands $F$ is 
\begin{equation*}
 {\rm cost}_{{\rm nest}}(Q,F) := \inf_{c\in C_{{\rm nest}}} \sup_{f\in F}
\mathbb{E}({\rm cost}_c(Q,f))
\end{equation*}
in the nested subspace sampling model and 
\begin{equation*}
 {\rm cost}_{{\rm unr}}(Q,F) := \sup_{f\in F} \mathbb{E}({\rm cost}_{c_a}(Q,f))
\end{equation*}
in the unrestricted subspace sampling model.
The \emph{randomized (worst case) error} $e(Q,F)$ of approximating the integration functional $I$ by $Q$ on $F$ is defined as
\begin{displaymath}
e(Q,F) := \bigg(\sup_{f \in F} \mathbb{E} \left( \left( I(f) - Q(f) \right)^2 \right) \bigg)^{1/2}\, .
\end{displaymath}
For $N\in\mathbb{R}$, ${\rm mod} \in \{ {\rm nest}, {\rm unr}\}$, and $*\in \{{\rm ran},{\rm res}, {\rm res}-\omega\}$ 
let us define the corresponding \emph{$N$th minimal error} by
\begin{equation*}
 e^*_{{\rm mod}}(N,F) := \inf\{ e(Q,F) \,|\, Q\in\mathcal{A}^*
\hspace{1ex}\text{and}\hspace{1ex}{\rm cost}_{{\rm mod}}(Q,F) \le N\}.
\end{equation*}

\subsection{Strong Polynomial tractability}
\label{tractabilitysection}
Let $\omega\in\mathbb{N}$, ${\rm mod} \in \{ {\rm nest}, {\rm unr}\}$, and $*\in \{{\rm ran},{\rm res}, {\rm res}-\omega\}$.
The  $\varepsilon$-complexity of the infinite-dimensional integration
problem $I$ on $\mathcal{H}_{{\bf \gamma}}$ in the considered cost model with respect to the
class of admissable randomized algorithms $\mathcal{A}^*$ is the 
minimal cost among all admissable algorithms, whose randomized
errors are at most $\varepsilon$, i.e.,
\begin{equation}
\label{comp}
  {\rm comp}^*_{{\rm mod}}(\varepsilon, B_{{\bf \gamma}})
 \,:=\, \inf\left\{{\rm cost_{{\rm mod}}}(Q, B_{{\bf \gamma}}) \,|\,
Q \in\mathcal{A}^* \hspace{1ex}\text{and}
\hspace{1ex} e(Q, B_{{\bf \gamma}})\le\varepsilon\right\}.
\end{equation}
The integration problem $I$ is said to be {\em strongly polynomially
tractable} if there are non-negative constants $C$ and $p$ such that
\begin{equation}
\label{pol-tr}
    {\rm comp}^*_{{\rm mod}}(\varepsilon, B_{{\bf \gamma}})\le C \,\varepsilon^{-p} \qquad
   \mbox{for all $\varepsilon>0$}.
\end{equation}
The {\em exponent of strong polynomial tractability} is given by
\begin{equation*}
p^*_{{\rm mod}}= p^*_{{\rm mod}}({\bf \gamma}) := \inf\{ p\,|\, \text{$p$ satisfies \eqref{pol-tr} for some $C>0$} \}.
\end{equation*}
Essentially, $1/p^*_{{\rm mod}}$ is the
\emph{convergence rate} of the
$N$th minimal error
$e^*_{{\rm mod}}(N, B_{{\bf \gamma}})$.
In particular, we have for all $p>p^*_{{\rm mod}}$ that
$e^*_{{\rm mod}}(N, B_{{\bf \gamma}}) = O(N^{-1/p})$.

\section{Lower Bounds}
\label{LB}

We start in Section \ref{WEIGHTS} by proving lower bounds for general 
weights. In Section \ref{SPECIFIC} we show how these bounds simplify
for several specific classes of weights.

\subsection{Results for General Weights}
\label{WEIGHTS}

Let ${\bf \gamma}=(\gamma_u)_{u\in \mathcal{U}}$
be a given family of weights that satisfy (\ref{summable}).
We denote by $\widehat{{\bf \gamma}}$
the family of weights defined by
\begin{equation}
\label{gammahut}
\widehat{\gamma}_u := \gamma_u C_0^{|u|}
\hspace{2ex}\text{for all $u\in \mathcal{U}$.}
\end{equation}
Recall that (\ref{summable}) implies 
$\sum_{u\in \mathcal{U}} \widehat{\gamma}_u < \infty$.
Weights ${\bf \gamma}$ are called \emph{finite-order weights
of order $\omega$}
if  there exists an
$\omega\in \mathbb{N}$ such that
$\gamma_{u}=0$ for all $u\in\mathcal{U}$ with $|u|>\omega$.
Finite-order weights were introduced in \cite{DSWW06} for spaces
of functions with a finite number of variables.
The following definition is taken from \cite{Gne10}.

\begin{definition}
\label{Cut-Off}
For weights ${\bf \gamma}$ and $\sigma \in\mathbb{N}$ let us define the \emph{cut-off weights}
of order $\sigma$
\begin{equation}
\label{gammasigma}
{\bf \gamma}^{(\sigma)}
= (\gamma_u^{(\sigma)})_{u\in\mathcal{U}}
\hspace{2ex}\text{via}\hspace{2ex}
\gamma^{(\sigma)}_{u} =
\begin{cases}
\,\gamma_u
\hspace{2ex}&\text{if $|u| \le \sigma$},\\
\,0
\hspace{2ex} &\text{otherwise.}
\end{cases}
\end{equation}
\end{definition}

Cut-off weights of order $\sigma$ are in particular
finite-order weights of order $\sigma$.
Let us denote by $u_1(\sigma), u_2(\sigma),\ldots $,
the distinct non-empty sets $u\in\mathcal{U}$ with
$\gamma_u^{(\sigma)} >0$
for which
$\widehat{\gamma}_{u_1(\sigma)}^{(\sigma)} \ge
\widehat{\gamma}_{u_2(\sigma)}^{(\sigma)} \ge \cdots$.
Let us put $u_0(\sigma) := \emptyset$. We can make the
same definitions for
$\sigma = \infty$; then we have obviously
${\bf \gamma}^{(\infty)} = {\bf \gamma}$.
For convenience we will often suppress any reference to $\sigma$
in the case where $\sigma = \infty$.
For $\sigma\in\mathbb{N}\cup\{\infty\}$ let us define
\begin{equation*}
{\rm decay}_{{\bf \gamma},\sigma} :=
\sup \left\{ p\in \mathbb{R} \,\Big|\, \lim_{j\to\infty}
\widehat{\gamma}_{u_j(\sigma)}^{(\sigma)}j^p =0
\right\}.
\end{equation*}
Due to assumption (\ref{summable}) the weights we consider always 
satisfy ${\rm decay}_{{\bf \gamma}, \sigma} \ge 1$ for all 
$\sigma\in\mathbb{N}\cup\{\infty\}$.
The following definition is from \cite{Gne10}.

\begin{definition}
\label{Tsternsigma}
For $\sigma\in\mathbb{N}\cup\{\infty\}$ let $t^*_\sigma \in [0,\infty]$
be defined as
\begin{equation*}
t^*_\sigma := \inf \big\{t\ge 0\,|\, \,\exists\, C_t>0 \,\,\forall
\,v \in \mathcal{U}: |\{i\in \mathbb{N} \,|\,
u_i(\sigma) \subseteq v\}| \le C_t|v|^t \big\}.
\end{equation*}
\end{definition}

Let $\sigma\in\mathbb{N}$. Since $|u_i(\sigma)|\le \sigma$ for all $i\in\mathbb{N}$, we have
obviously $t^*_\sigma \le \sigma$.
On the other hand, if we have an infinite sequence
$(u_j(\sigma))_{j\in\mathbb{N}}$, it is not hard to verify that
$t^*_\sigma \ge 1$, see \cite{Gne10}.

For $v_1,\ldots, v_n \in \mathcal{U}$ we use the short hand $\{v_i\}$ for $(v_i)^n_{i=1}$.
Put $v:=\cup^n_{i=1} v_i$
and define the mapping
\begin{equation}
\label{psi-decomp}
\Psi_{\{v_i\},a} := \sum_{j;\, \exists i\in [n]: u_j \subseteq v_i}
P_{u_j}.
\end{equation} 
$\Psi_{\{v_i\},a}$ is the orthogonal projection
of $\mathcal{H}_{{\bf \gamma}}$ onto the subspace
\begin{equation*}
H_{\{v_i\},a} := \sum_{j;\, \exists i\in [n]: u_j \subseteq v_i}
H_{u_j}.
\end{equation*}
Put
\begin{equation*}
 {\rm b}_{\{v_i\},a} := \sup_{f\in B_{{\bf \gamma}}} |I(f) - I(\Psi_{\{v_i\},a}f)|.
\end{equation*}
In the case where $n=1$ and $v=v_1$, we simply write $\Psi_{v,a}$ and
${\rm b}_{v,a}$. In that case we have, due to (\ref{vanish}),
\begin{equation}
 \label{psiva}
(\Psi_{v,a}(f))({\bf x}) = f({\bf x}_v;{\bf a})
\hspace{2ex}\text{for all $f\in \mathcal{H}_{{\bf \gamma}}$
and ${\bf x}\in\mathcal{X}$,}
\end{equation}
where the $j$th component of $({\bf x}_v;{\bf a})$ is defined by
\begin{equation*}
({\bf x}_v;{\bf a})_j :=
\begin{cases}
\, x_j
\hspace{2ex}&\text{if $j\in v$},\\
\, a
\hspace{2ex}&\text{otherwise.}
\end{cases}
\end{equation*}

\begin{lemma}\label{Lemma2.7''}
 Let $v_1,\ldots, v_n\in\mathcal{U}$. Then
\begin{equation*}
 {\rm b}^2_{\{v_i\},a} 
= \sum_{j; \,\forall i\in [n]: u_j \nsubseteq v_i} \widehat{\gamma}_{u_j}.
\end{equation*}
\end{lemma}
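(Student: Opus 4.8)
The plan is to recognize the quantity $\mathrm{b}_{\{v_i\},a}$ as the operator norm of a bounded linear functional on $\mathcal{H}_{{\bf \gamma}}$, and then to evaluate that norm explicitly using the orthogonal decomposition of $\mathcal{H}_{{\bf \gamma}}$ into the spaces $H_u$.

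First I would observe that the map $L := I - I\circ\Psi_{\{v_i\},a} = I\circ(\mathrm{Id} - \Psi_{\{v_i\},a})$ is a bounded linear functional on $\mathcal{H}_{{\bf \gamma}}$, so that $\mathrm{b}_{\{v_i\},a} = \sup_{f\in B_{{\bf \gamma}}}|L(f)|$ equals the $\mathcal{H}_{{\bf \gamma}}$-norm of the Riesz representer of $L$. To identify this representer I would use that $I(g) = \langle g, h\rangle_{{\bf \gamma}}$ for all $g\in\mathcal{H}_{{\bf \gamma}}$, together with the fact (already recorded in the excerpt) that $\Psi_{\{v_i\},a}$ is an orthogonal projection, hence self-adjoint:
\[
L(f) = \langle f, h\rangle_{{\bf \gamma}} - \langle \Psi_{\{v_i\},a}f, h\rangle_{{\bf \gamma}} = \langle f, (\mathrm{Id}-\Psi_{\{v_i\},a})h\rangle_{{\bf \gamma}}.
\]
By the Cauchy--Schwarz inequality, with equality for $f$ proportional to the representer, this yields $\mathrm{b}_{\{v_i\},a} = \|(\mathrm{Id}-\Psi_{\{v_i\},a})h\|_{{\bf \gamma}}$.

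Next I would evaluate this complementary projection of the representer. Since the subspaces $H_u$, $u\in\mathcal{U}$, are mutually orthogonal in $\mathcal{H}_{{\bf \gamma}}$ (Lemma \ref{Lemma6}, with the $P_u$ orthogonal projections) and $h = \sum_{u\in\mathcal{U}} h_u$ with $h_u = P_u(h)$, the projection $\Psi_{\{v_i\},a} = \sum_{j;\,\exists i: u_j\subseteq v_i} P_{u_j}$ removes precisely the components $h_{u_j}$ indexed by those $j$ for which $u_j\subseteq v_i$ for some $i\in[n]$; the empty-set component is removed as well because $\emptyset\subseteq v_i$. Hence
\[
(\mathrm{Id}-\Psi_{\{v_i\},a})h = \sum_{j;\,\forall i\in[n]: u_j\nsubseteq v_i} h_{u_j},
\]
where only the non-empty sets $u_j$ survive. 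Taking norms, using orthogonality (Parseval) together with (\ref{hu-norm}), which gives $\|h_{u_j}\|^2_{{\bf \gamma}} = \gamma_{u_j}C_0^{|u_j|} = \widehat{\gamma}_{u_j}$, produces exactly the claimed identity.

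The computational steps are routine; the only point requiring care is the first one, namely justifying that $\mathrm{b}_{\{v_i\},a}$ really is the representer norm (continuity of $L$ and attainment of the supremum over $B_{{\bf \gamma}}$ via Cauchy--Schwarz). Convergence of the orthogonal series defining $h$ and $(\mathrm{Id}-\Psi_{\{v_i\},a})h$ in the $\mathcal{H}_{{\bf \gamma}}$-norm is guaranteed by $\sum_{u\in\mathcal{U}}\widehat{\gamma}_u<\infty$, and once orthogonality is available the final norm evaluation is immediate. I expect no genuine obstacle beyond the bookkeeping of the index set, in particular confirming that the empty set and every captured $u_j$ drop out, so that exactly the sets not contained in any $v_i$ remain.
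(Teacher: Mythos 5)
Your proof is correct and takes essentially the same route as the paper: both identify $\mathrm{b}_{\{v_i\},a}$ as the $\mathcal{H}_{{\bf \gamma}}$-norm of the representer $h-h_{\{v_i\},a}$ of the functional $I-I\circ\Psi_{\{v_i\},a}$ and then evaluate that norm via the orthogonality of the components $h_{u_j}$ together with (\ref{hu-norm}). The only difference is that you make explicit the Cauchy--Schwarz/duality step and the removal of the empty-set component, which the paper leaves implicit.
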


\begin{proof}
Let $h_{\{v_i\},a}$ denote the representer of the continuous 
functional $I\circ  \Psi_{\{v_i\},a}$. Due to (\ref{psi-decomp})
we get
\begin{equation*}
  h_{\{v_i\},a}
= \sum_{j;\, \exists i\in [n]: u_j \subseteq v_i} h_{u_j}.
\end{equation*}
Since $h-h_{\{v_i\},a}$ is the representer of $I - I\circ  \Psi_{\{v_i\},a}$
in $\mathcal{H}_{{\bf \gamma}}$, we obtain with (\ref{hu-norm})
\begin{equation*}
  {\rm b}^2_{\{v_i\},a} = \|h- h_{\{v_i\},a}\|^2_{{\bf \gamma}}
= \left\| \sum_{j;\, \forall i\in [n]: u_j \nsubseteq v_i} 
h_{u_j} \right\|^2_{{\bf \gamma}}
= \sum_{j;\, \forall i\in [n]: u_j \nsubseteq v_i} 
\left\| h_{u_j} \right\|^2_{{\bf \gamma}}
= \sum_{j;\, \forall i\in [n]: u_j \nsubseteq v_i}  \widehat{\gamma}_{u_j}.
\end{equation*}
\end{proof}

\begin{lemma}\label{Lemma3.1''}
Let $\theta\in (1/2,1]$ and $v_1,\ldots,v_n\in\mathcal{U}$. Let the randomized algorithm $Q\in\mathcal{A}^{{\rm ran}}$ satisfy $\mathbb{P}(Q(f)= Q(\Psi_{\{v_i\},a}f)) \ge \theta$
for all $f\in B_{{\bf \gamma}}$. Then
\begin{equation*}
e(Q,B_{{\bf \gamma}})^2 \ge (2\theta -1) {\rm b}^2_{\{v_i\},a}.
\end{equation*}
\end{lemma}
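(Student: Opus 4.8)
The plan is to use a symmetrization (``fooling'') argument that exploits the fact that $\Psi_{\{v_i\},a}$ is an orthogonal projection. First I would recast the right-hand side as a statement about the orthogonal complement of $H_{\{v_i\},a}$. Writing $\mathrm{Id}-\Psi_{\{v_i\},a}$ for the complementary projection, I claim
\begin{equation*}
{\rm b}_{\{v_i\},a} = \sup\{ |I(g)| \,:\, g\in B_{{\bf \gamma}},\ \Psi_{\{v_i\},a}g = 0 \}.
\end{equation*}
The inequality ``$\le$'' follows by associating to $f\in B_{{\bf \gamma}}$ the element $g=(\mathrm{Id}-\Psi_{\{v_i\},a})f$, which lies in $B_{{\bf \gamma}}$ since orthogonal projections are norm-nonincreasing, satisfies $\Psi_{\{v_i\},a}g=0$, and obeys $I(f)-I(\Psi_{\{v_i\},a}f)=I(g)$. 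The inequality ``$\ge$'' follows because any admissible $g$ satisfies $g=(\mathrm{Id}-\Psi_{\{v_i\},a})g$, so $|I(g)|$ is one of the quantities appearing in the supremum defining ${\rm b}_{\{v_i\},a}$.

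Assuming ${\rm b}_{\{v_i\},a}>0$ (otherwise the assertion is trivial), I would fix $g\in B_{{\bf \gamma}}$ with $\Psi_{\{v_i\},a}g=0$ and $|I(g)|$ as close to ${\rm b}_{\{v_i\},a}$ as desired. Since $B_{{\bf \gamma}}$ is symmetric, both $g$ and $-g$ lie in $B_{{\bf \gamma}}$, and $\Psi_{\{v_i\},a}g=\Psi_{\{v_i\},a}(-g)=0$ (the zero function), so that $Q(\Psi_{\{v_i\},a}g)=Q(\Psi_{\{v_i\},a}(-g))=Q(0)$. Applying the hypothesis once to $f=g$ and once to $f=-g$ gives $\mathbb{P}(Q(g)=Q(0))\ge\theta$ and $\mathbb{P}(Q(-g)=Q(0))\ge\theta$, and a union bound then yields $\mathbb{P}(A)\ge 2\theta-1$ for the event $A:=\{Q(g)=Q(0)=Q(-g)\}$.

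On the event $A$ the algorithm cannot tell $g$ from $-g$: writing $c:=Q(0)$ and using $I(-g)=-I(g)$,
\begin{equation*}
(I(g)-Q(g))^2 + (I(-g)-Q(-g))^2 = (I(g)-c)^2 + (I(g)+c)^2 = 2I(g)^2 + 2c^2 \ge 2 I(g)^2.
\end{equation*}
Consequently, bounding the supremum over $B_{{\bf \gamma}}$ from below by the average of the errors at $g$ and $-g$, restricting the expectation to $A$, and discarding the nonnegative term $2c^2$ gives
\begin{equation*}
2\, e(Q,B_{{\bf \gamma}})^2 \ge \mathbb{E}\big[(I(g)-Q(g))^2\big] + \mathbb{E}\big[(I(-g)-Q(-g))^2\big] \ge \mathbb{E}\big[ \mathbf{1}_A\, 2I(g)^2 \big] = 2 I(g)^2\, \mathbb{P}(A) \ge 2 (2\theta-1) I(g)^2.
\end{equation*}
Dividing by $2$ and letting $|I(g)|\to {\rm b}_{\{v_i\},a}$ yields the claim. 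The only genuinely substantive step is the opening identity relating ${\rm b}_{\{v_i\},a}$ to the supremum of $|I(\cdot)|$ over the kernel of $\Psi_{\{v_i\},a}$, which is exactly where the orthogonal-projection structure (and implicitly Lemma~\ref{Lemma2.7''}) enters; everything afterwards is the symmetrization bookkeeping, whose only delicate point is passing to the event $A$ before dropping the term $2c^2$.
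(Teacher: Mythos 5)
Your proposal is correct and follows essentially the same route as the paper's proof: both pass to $g=(\mathrm{Id}-\Psi_{\{v_i\},a})f$ in the unit ball (using that $\Psi_{\{v_i\},a}$ is an orthogonal projection), apply the hypothesis to $g$ and $-g$ with a union bound to get $\mathbb{P}(A)\ge 2\theta-1$, and exploit the symmetrization identity to extract $(2\theta-1)I(g)^2$. The only cosmetic differences are that you bound the worst case error by the average of the two errors where the paper uses their maximum, and that you make the identification of ${\rm b}_{\{v_i\},a}$ with $\sup\{|I(g)| : g\in B_{{\bf \gamma}},\ \Psi_{\{v_i\},a}g=0\}$ explicit rather than taking the supremum over $f$ at the end.
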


\begin{proof}
 Since $\Psi_{\{v_i\},a}$ is an orthogonal projection, we have for all
$f\in B_{{\bf \gamma}}$ that $g:= f-\Psi_{\{v_i\},a}f \in B_{{\bf \gamma}}$. Furthermore, 
$\Psi_{\{v_i\},a}(g) = \Psi_{\{v_i\},a}(-g)=0$. 
Let $A:=\{Q(g) = Q(-g)\}$. Then $\{Q(g) = Q(\Psi_{\{v_i\},a}g)\} 
\cap \{Q(-g) = Q(\Psi_{\{v_i\},a}(-g))\} \subseteq A$, and hence
$\mathbb{P}(A) \ge 2\theta -1$. Therefore
\begin{equation*}
 \begin{split}
e(Q,B_{{\bf \gamma}})^2 \geq
&\max \left\{ \mathbb{E} \left( \left( I(g) - Q( g) \right)^2 \right) ,
\mathbb{E} \left( \left(  I(-g) - Q(-g) ) \right)^2 \right) \right\}\\ 
\geq
&\max \left\{ \int_A \left( I(g) - Q( g) \right)^2 
\,{\rm d}\mathbb{P},
\int_A \left(  I(-g) - Q(-g) ) \right)^2  \,{\rm d}\mathbb{P} \right\}\\ 
\geq &(2\theta -1) \vert I(g) \vert^2 = (2\theta-1)
|I(f) - I( \Psi_{\{v_i\},a}f)|^2.
 \end{split}
\end{equation*}
Hence 
$e(Q,B_{{\bf \gamma}})^2 \ge (2\theta-1)
\sup_{f\in B_{{\bf \gamma}}} |I(f) - I( \Psi_{\{v_i\},a}f)|^2
= (2\theta -1) {\rm b}^2_{\{v_i\},a}$.
\end{proof}
{\bf Further Assumptions.} 
We assume for the rest of this article that $\$(\nu) = \Omega(\nu^s)$ for some
$s\in (0,\infty)$. Furthermore, we assume that $\gamma_{\{1\}} > 0$
and that there exists an $\alpha >0$ such that for univariate integration
in $H(\gamma_{\{1\}}k)$ the $N$th minimal error satisfies
\begin{equation}
\label{ass-1}
 e^{{\rm ran}}(N, B(\gamma_{\{1\}}k)) = \Omega(N^{-\alpha/2}).
\end{equation}
(Note that in the univariate situation the nested and the unrestricted
subspace sampling models are equal; that is why we suppress the reference
to ${\rm unr}$ or ${\rm nest}$.)
Since $B(\gamma_{\{1\}}k) \subseteq B_{{\bf \gamma}}$, assumption (\ref{ass-1})
implies in particular
\begin{equation}
\label{alpha}
 e^{{\rm ran}}_{{\rm nest}}(N, B_{{\bf \gamma}})  = \Omega(N^{-\alpha/2})
\hspace{2ex}\text{and}\hspace{2ex}
e^{{\rm res}-\omega}_{{\rm unr}}(N, B_{{\bf \gamma}}) \ge e^{{\rm res}}_{{\rm unr}}(N, B_{{\bf \gamma}})
= \Omega(N^{-\alpha/2}).
\end{equation} 

\begin{theorem}\label{Theorem3.3''}
Consider the nested subspace sampling model. 
To achieve strong polynomial tractability for the class $\mathcal{A}^{{\rm ran}}$
it is necessary that the weights satisfy 
\begin{equation}
\label{decay-bed}
{\rm decay}_{{\bf \gamma},\sigma} > 1 
\hspace{2ex}\text{for all $\sigma\in\mathbb{N}$.}
\end{equation}
If (\ref{decay-bed}) holds, we have
\begin{equation*}
p^{{\rm ran}}_{{\rm nest}} \ge \max \left\{ \frac{2}{\alpha}, \sup_{\sigma\in\mathbb{N}}
\frac{2 s/t^*_\sigma}{{\rm decay}_{{\bf \gamma},\sigma} - 1} \right\}. 
\end{equation*}
\end{theorem}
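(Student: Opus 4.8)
The plan is to establish the two lower bounds on $p^{\rm ran}_{\rm nest}$ separately and then combine them. The bound $p^{\rm ran}_{\rm nest}\ge 2/\alpha$ is immediate: assumption \eqref{alpha} gives $e^{\rm ran}_{\rm nest}(N,B_{{\bf \gamma}})=\Omega(N^{-\alpha/2})$, so the convergence rate of the $N$th minimal error cannot exceed $\alpha/2$, which by the definition of $p^{\rm ran}_{\rm nest}$ forces $p^{\rm ran}_{\rm nest}\ge 2/\alpha$. The entire difficulty lies in the second term $\sup_\sigma \frac{2s/t^*_\sigma}{{\rm decay}_{{\bf \gamma},\sigma}-1}$ and in the accompanying necessity of \eqref{decay-bed}. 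I would prove this for each fixed $\sigma\in\mathbb{N}$ and then take the supremum. Since the cut-off weights satisfy $B_{{\bf \gamma}^{(\sigma)}}\subseteq B_{{\bf \gamma}}$, every $Q$ has smaller worst-case cost and smaller error over the smaller class, so any lower bound for $e^{\rm ran}_{\rm nest}(N,B_{{\bf \gamma}^{(\sigma)}})$ transfers to $e^{\rm ran}_{\rm nest}(N,B_{{\bf \gamma}})$; hence I work throughout in $\mathcal{H}_{{\bf \gamma}^{(\sigma)}}$, whose sorted weights are exactly the $\widehat{\gamma}_{u_j(\sigma)}$.

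The core step turns a cost bound into an applicability condition for Lemma~\ref{Lemma3.1''}. Fix $\theta\in(1/2,1]$ and an algorithm $Q$ with ${\rm cost}_{\rm nest}(Q,B_{{\bf \gamma}^{(\sigma)}})\le N$. Choosing a near-optimal nested sequence ${\rm v}=(v_i)$, the total cost of a run dominates $\$(|v_{M_f}|)$, where $v_{M_f}$ is the highest subspace actually used; since $\mathbb{E}\,\$(|v_{M_f}|)\le N'$ with $N'$ close to $N$, Markov's inequality shows that with probability at least $\theta$ every sample point lies in $\mathcal{X}_{v,a}$, where $v:=v_{m^*}$ is the largest subspace with $\$(|v_{m^*}|)\le N'/(1-\theta)$. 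A coupling along a common random seed, using that $f$ and $\Psi_{v,a}f$ agree on $\mathcal{X}_{v,a}$ by \eqref{psiva} together with an induction over the adaptively chosen evaluations, then yields $\mathbb{P}(Q(f)=Q(\Psi_{v,a}f))\ge\theta$ for every $f\in B_{{\bf \gamma}^{(\sigma)}}$. Lemma~\ref{Lemma3.1''} gives $e(Q,B_{{\bf \gamma}^{(\sigma)}})^2\ge(2\theta-1)\,{\rm b}^2_{v,a}$, and Lemma~\ref{Lemma2.7''} identifies ${\rm b}^2_{v,a}=\sum_{j:\,u_j(\sigma)\nsubseteq v}\widehat{\gamma}_{u_j(\sigma)}$.

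It remains to bound this tail from below uniformly in $Q$. The assumption $\$(\nu)=\Omega(\nu^s)$ with $\$(|v|)\le N'/(1-\theta)$ gives $|v|\le c_1N'^{1/s}$, and the definition of $t^*_\sigma$ shows that for every $t>t^*_\sigma$ at most $C_t|v|^t\le \bar K(N):=c_2N'^{t/s}$ of the sets $u_j(\sigma)$ are contained in $v$. Deleting at most $\bar K(N)$ terms can remove at most the $\bar K(N)$ largest weights, so ${\rm b}^2_{v,a}\ge\sum_{j>\bar K(N)}\widehat{\gamma}_{u_j(\sigma)}$, a bound independent of $Q$. For the lower bound on this tail I would use the decay definition: for every $p>{\rm decay}_{{\bf \gamma},\sigma}$ there are $c>0$ and indices $j_\ell\to\infty$ with $\widehat{\gamma}_{u_{j_\ell}(\sigma)}\ge c\,j_\ell^{-p}$, whence, by monotonicity of the sorted weights and summing over $K<j\le j_\ell$ with $K\approx j_\ell/2$, one gets $\sum_{j>K}\widehat{\gamma}_{u_j(\sigma)}\ge c'K^{1-p}$ along a subsequence $K=K_\ell\to\infty$. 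Matching $\bar K(N_\ell)=K_\ell$, i.e. $N_\ell\approx(K_\ell/c_2)^{s/t}$, produces $N_\ell\to\infty$ with $e^{\rm ran}_{\rm nest}(N_\ell,B_{{\bf \gamma}})^2\ge(2\theta-1)c'K_\ell^{1-p}\ge c''N_\ell^{\,t(1-p)/s}$, i.e. $e^{\rm ran}_{\rm nest}(N_\ell,B_{{\bf \gamma}})=\Omega(N_\ell^{-t(p-1)/(2s)})$. This rules out the upper bound $N^{-1/q}$ for every $q<2s/(t(p-1))$ and therefore gives $p^{\rm ran}_{\rm nest}\ge 2s/(t(p-1))$. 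Letting $t\downarrow t^*_\sigma$ and $p\downarrow{\rm decay}_{{\bf \gamma},\sigma}$ yields the $\sigma$-term, and the supremum over $\sigma$ completes the bound. When ${\rm decay}_{{\bf \gamma},\sigma}=1$ the same computation (now with $p\downarrow1$) makes the exponent $2s/(t(p-1))$ blow up, so $p^{\rm ran}_{\rm nest}=\infty$ and strong polynomial tractability fails; this is exactly the necessity of \eqref{decay-bed}.

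The main obstacle is the core step of the second paragraph: making the passage from an expected-cost bound to the pathwise identity $Q(f)=Q(\Psi_{v,a}f)$ fully rigorous for adaptive, non-linear randomized algorithms in the nested model. One must carefully justify (i) that in ${\rm cost}_{\rm nest}$ one may fix a single near-optimal nested sequence, (ii) the Markov argument controlling the highest subspace used, valid simultaneously for all inputs, and (iii) the coupling that transfers the event ``all samples lie in $\mathcal{X}_{v,a}$'' from the run on $f$ to the run on $\Psi_{v,a}f$. A secondary technical point is the subsequence bookkeeping linking the decay-induced indices $j_\ell$ to admissible cost budgets $N_\ell$: it yields the exponent only along a subsequence, which is sufficient for a lower bound on $p^{\rm ran}_{\rm nest}$ but relies on the monotonicity of the sorted weights to turn a single large weight into a genuine tail estimate.
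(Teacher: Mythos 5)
Your proposal is correct and follows essentially the same route as the paper's proof: a Markov-inequality argument on the expected nested-sampling cost to force $\mathbb{P}(Q(f)=Q(\Psi_{v_m,a}f))\ge\theta$ (the paper fixes $\theta=3/4$ via the threshold $\$(|v_m|)\le 4(N+1)$), followed by Lemmas~\ref{Lemma3.1''} and~\ref{Lemma2.7''}, the counting bound $|\{j:u_j(\sigma)\subseteq v_m\}|\le C_t|v_m|^t=O(N^{t/s})$, the tail estimate from the decay definition, reduction to cut-off weights via $e(Q,B_{{\bf \gamma}})\ge e(Q,B_{{\bf \gamma}^{(\sigma)}})$, and the necessity of \eqref{decay-bed} read off from the resulting exponent inequality. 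The technical points you flag (fixing a near-optimal nested sequence, the coupling for adaptive randomized algorithms, and the subsequence bookkeeping in the tail bound) are exactly the steps the paper treats tersely, and your proposed resolutions are the standard and correct ones.
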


As we will see in Section \ref{SPECIFIC}, for product weights and 
finite-order weights condition (\ref{decay-bed}) is equivalent to
${\rm decay}_{{\bf \gamma}} = {\rm decay}_{{\bf \gamma},\infty} > 1$.

\begin{proof}
Let $Q\in\mathcal{A}^{{\rm ran}}$  with ${\rm cost}_{{\rm nest}}(Q,B_{{\bf \gamma}}) \le N$. Then there exists
an increasing sequence ${\rm v} = (v_i)_{i\in\mathbb{N}}$, $\emptyset \neq v_i\in \mathcal{U}$,
such that $\mathbb{E}({\rm cost}_{c_{{\rm v},a}}(Q,f)) \le N+1$ for every $f\in B_{{\bf \gamma}}$.
Let $m$ be the largest integer satisfying $\$(|v_m|) \le 4(N+1)$.
This implies for all $f\in B_{{\bf \gamma}}$ that 
$\mathbb{P}(Q(f) = Q(\Psi_{v_m,a}f)) \ge 3/4$, see (\ref{psiva}). 
Due to Lemma \ref{Lemma2.7''} and \ref{Lemma3.1''} we get
\begin{equation*}
 e(Q,B_{{\bf \gamma}})^2 \ge \frac{1}{2} \sum_{j;\, u_j\nsubseteq v_m} \widehat{\gamma}_{u_j}.
\end{equation*}
Let us now assume that ${\bf \gamma}$ are weights of finite order $\omega$. 
Then we get for $t>t^*_\omega$ and a suitable constant $C_t >0$
\begin{equation*}
 \tau_m := |\{ j\,|\, u_j \subseteq v_m \}| \le C_t |v_m|^t
= O(N^{t/s}),
\end{equation*}
since $N = \Omega(|v_m|^s)$. Hence we get for $p_\omega > {\rm decay}_{{\bf \gamma}, \omega} = {\rm decay}_{{\bf \gamma}} \ge 1$
\begin{equation*}
 e(Q,B_{{\bf \gamma}})^2 \ge \frac{1}{2} \sum_{j=\tau_m+1}^\infty 
\widehat{\gamma}_{u_j} = \Omega(\tau_m^{1-p_\omega}) = 
\Omega \left( N^{\frac{t}{s}(1-p_\omega)} \right).
\end{equation*}
For general weights ${\bf \gamma}$, $\sigma\in\mathbb{N}$, and cut-off weights
${\bf \gamma}^{(\sigma)}$ we have $e(Q,B_{{\bf \gamma}}) \ge  e(Q,B_{{\bf \gamma}^{(\sigma)}})$, 
see also \cite[Remark~3.3]{Gne10}. Since the cut-off weights 
${\bf \gamma}^{(\sigma)}$ are weights of finite order $\sigma$, we get for 
all $p_\sigma > {\rm decay}_{{\bf \gamma},\sigma}$ and $t_\sigma > t^*_\sigma$
\begin{equation}
\label{hilfsgleichung}
 e(Q,B_{{\bf \gamma}})^2 
= \Omega \left( N^{\frac{t_\sigma}{s}(1-p_\sigma)} \right).
\end{equation}
Since (\ref{alpha}) holds, the inequality for the exponent of
tractability follows. 

Now assume that the infinite-dimensional integration problem $I$
is strongly polynomially tractable.
Let $\sigma\in\mathbb{N}$. Then we get from inequality (\ref{hilfsgleichung})
that $p_\sigma \ge 1 + 2s/(t_\sigma\, p^{{\rm ran}}_{{\rm nest}})$. Hence
\begin{equation*}
{\rm decay}_{{\bf \gamma},\sigma} \ge 1 + \frac{2s/t^*_\sigma}{p^{{\rm ran}}_{{\rm nest}}}.
\end{equation*}
Thus we have ${\rm decay}_{{\bf \gamma},\sigma} >1$ for all $\sigma\in\mathbb{N}$. 
\end{proof}

\begin{theorem}\label{Theorem3.1''}
Consider the unrestricted subspace sampling model. 
To achieve strong polynomial tractability for the class $\mathcal{A}^{{\rm res}}$
it is necessary that the 
weights satisfy 
\begin{equation*}
{\rm decay}_{{\bf \gamma},\sigma} > 1 
\hspace{2ex}\text{for all $\sigma\in\mathbb{N}$.}
\end{equation*}
If this is the case, we have
\begin{equation*}
p^{{\rm res}}_{{\rm unr}} \ge \max \left\{ \frac{2}{\alpha}, \sup_{\sigma\in\mathbb{N}}
\frac{2\min\{1,s/t^*_\sigma\}}{{\rm decay}_{{\bf \gamma},\sigma} - 1} \right\}. 
\end{equation*}
\end{theorem}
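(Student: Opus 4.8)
The plan is to follow the template of the proof of Theorem~\ref{Theorem3.3''}, but to replace the single nested block $v_m$ by the whole finite family $\{v_i\}^n_{i=1}$ to which an algorithm from $\mathcal{A}^{{\rm res}}$ is bound, and to replace the probabilistic reduction (where one only controls $Q(f)=Q(\Psi_{v_m,a}f)$ with probability $\theta=3/4$) by an \emph{exact} reduction. Fix $Q\in\mathcal{A}^{{\rm res}}$ with ${\rm cost}_{{\rm unr}}(Q,B_{{\bf \gamma}})\le N$; by definition of $\mathcal{A}^{{\rm res}}$ there are fixed sets $v_1,\ldots,v_n\in\mathcal{U}$ with $\sum^n_{i=1}\$(|v_i|)={\rm cost}_{{\rm unr}}(Q,B_{{\bf \gamma}})\le N$, and for every $f$ the $i$th sample point lies in $\mathcal{X}_{v_i,a}$. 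The first key step is to show that $Q(f)=Q(\Psi_{\{v_i\},a}f)$ holds \emph{surely}: running $Q$ on $f$ and on $g:=\Psi_{\{v_i\},a}f$ with the same realization $\omega$, an induction over the adaptively chosen sample points shows that they coincide, because by \eqref{vanish} an evaluation at a point of $\mathcal{X}_{v_i,a}$ only sees the components indexed by $u\subseteq v_i$, and on exactly those components $f$ and $g$ agree. Hence Lemma~\ref{Lemma3.1''} applies with $\theta=1$, and together with Lemma~\ref{Lemma2.7''} this gives
\begin{equation*}
e(Q,B_{{\bf \gamma}})^2\ge {\rm b}^2_{\{v_i\},a}=\sum_{j;\,\forall i\in[n]:\,u_j\nsubseteq v_i}\widehat{\gamma}_{u_j}.
\end{equation*}

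Next I would pass to the cut-off weights ${\bf \gamma}^{(\sigma)}$ via the monotonicity $e(Q,B_{{\bf \gamma}})\ge e(Q,B_{{\bf \gamma}^{(\sigma)}})$, exactly as in the proof of Theorem~\ref{Theorem3.3''}, so that it suffices to treat finite-order weights and to control the covering number $\tau:=|\{\,j\mid\exists\,i\in[n]:u_j\subseteq v_i\,\}|$; since the $\widehat{\gamma}_{u_j}$ are non-increasing, removing at most $\tau$ of them yields $\sum_{j;\,\forall i:\,u_j\nsubseteq v_i}\widehat{\gamma}_{u_j}\ge\sum_{j>\tau}\widehat{\gamma}_{u_j}$. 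Here lies the single genuinely new ingredient, and the point where the exponent sharpens from $s/t^*_\sigma$ to $\min\{1,s/t^*_\sigma\}$. By the union bound and the defining property of $t^*_\sigma$ (Definition~\ref{Tsternsigma}), for every $t>t^*_\sigma$ we have $\tau\le\sum^n_{i=1}|\{j\mid u_j\subseteq v_i\}|\le C_t\sum^n_{i=1}|v_i|^t$. The cost constraint $\sum_i\$(|v_i|)\le N$ together with $\$(\nu)=\Omega(\nu^s)$ (which also forces $\$(\nu)\ge1$) must be turned into a bound on $\sum_i|v_i|^t$; the elementary optimization splits into two regimes, $t\ge s$ (one large block is extremal, giving $\sum_i|v_i|^t=O(N^{t/s})$) and $t<s$ (many cheap singletons are extremal, giving $\sum_i|v_i|^t=O(N)$), so that in all cases $\tau=O(N^{\max\{1,t/s\}})$.

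From here the argument runs parallel to \eqref{hilfsgleichung}: the definition of ${\rm decay}_{{\bf \gamma},\sigma}$ supplies, for every $p_\sigma>{\rm decay}_{{\bf \gamma},\sigma}$, the tail estimate $\sum_{j>\tau}\widehat{\gamma}_{u_j}=\Omega(\tau^{1-p_\sigma})$, and since the implied constants depend only on the weights, on $s$, and on $t,\sigma$ but not on $Q$, taking the infimum over $Q$ gives
\begin{equation*}
e^{{\rm res}}_{{\rm unr}}(N,B_{{\bf \gamma}})^2=\Omega\!\left(N^{\max\{1,t/s\}(1-p_\sigma)}\right).
\end{equation*}
Comparing this with strong polynomial tractability, i.e.\ with $e^{{\rm res}}_{{\rm unr}}(N,B_{{\bf \gamma}})=O(N^{-1/p})$ for every $p>p^{{\rm res}}_{{\rm unr}}$, forces $p_\sigma\ge1+2/(\max\{1,t/s\}\,p^{{\rm res}}_{{\rm unr}})$; letting $t\downarrow t^*_\sigma$ and $p_\sigma\downarrow{\rm decay}_{{\bf \gamma},\sigma}$ and using $1/\max\{1,t^*_\sigma/s\}=\min\{1,s/t^*_\sigma\}$ yields ${\rm decay}_{{\bf \gamma},\sigma}\ge1+2\min\{1,s/t^*_\sigma\}/p^{{\rm res}}_{{\rm unr}}>1$. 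This proves both the necessity of ${\rm decay}_{{\bf \gamma},\sigma}>1$ for all $\sigma\in\mathbb{N}$ and, after solving for $p^{{\rm res}}_{{\rm unr}}$, the bound $p^{{\rm res}}_{{\rm unr}}\ge\sup_{\sigma}2\min\{1,s/t^*_\sigma\}/({\rm decay}_{{\bf \gamma},\sigma}-1)$; the competing term $2/\alpha$ is contributed by \eqref{alpha}.

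The routine parts here are the cut-off reduction and the decay/tail estimate, both already established for the nested model. The two steps that require care and carry the new content are (i) the \emph{sure} identity $Q(f)=Q(\Psi_{\{v_i\},a}f)$, which hinges on the fact that membership in $\mathcal{A}^{{\rm res}}$ fixes the sampling subspaces $\mathcal{X}_{v_i,a}$ simultaneously for all $f$, and (ii) the covering-number optimization producing the exponent $\max\{1,t/s\}$ rather than $t/s$. I expect (ii) to be the main obstacle: the emergence of the regime $t<s$, where many cheap low-dimensional evaluations are allowed, is exactly what lets the unrestricted model beat the nested one, and verifying that this case—and not a single expensive block—governs $\tau$ is the crux of the estimate.
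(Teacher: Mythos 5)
Your proposal is correct and follows essentially the same route as the paper's proof: exact reduction $Q(f)=Q(\Psi_{\{v_i\},a}f)$ (asserted in the paper, justified by you via \eqref{vanish}), Lemmas \ref{Lemma2.7''} and \ref{Lemma3.1''}, the cut-off reduction, and the covering-number bound $\tau=O(N^{\max\{1,t/s\}})$. The paper obtains that last bound in one line via Jensen's inequality, $\sum_i|v_i|^t\le\bigl(\sum_i|v_i|^s\bigr)^{1/\min\{1,s/t\}}$, which is exactly your two-regime optimization in disguise.
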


\begin{proof}
Let $Q \in\mathcal{A}^{{\rm res}}$ have
${\rm cost}_{{\rm unr}}(Q,B_{{\bf \gamma}}) \le N$.
Then there exists an $n\in\mathbb{N}$ and coordinate sets $v_1,\ldots,v_n$
such that $Q$ selects randomly $n$ sample points ${\bf x}_1\in \mathcal{X}_{v_1,a},
\ldots ,{\bf x}_n \in\mathcal{X}_{v_n,a}$ and $\sum^n_{i=1} \$(|v_i|) \le N$. 
Since $Q(f) = Q(\Psi_{\{v_i\},a}f)$ for all $f\in B_{{\bf \gamma}}$, we obtain
from Lemma \ref{Lemma2.7''} and \ref{Lemma3.1''} 
\begin{equation*}
 e(Q,B_{{\bf \gamma}})^2 \ge \sum_{j; \,\forall i\in [n]: u_j \nsubseteq v_i} \widehat{\gamma}_{u_j}.
\end{equation*}
Let us first assume that ${\bf \gamma}$ are weights of finite order $\omega$.
Then we get with Jensen's inequality for $t>t^*_\omega$ and suitable
constants $C_t, c >0$
\begin{equation*}
 \begin{split}
|\{ j \,|\, \exists i\in [n]: u_j \subseteq v_i \}|
&\le \sum^n_{i=1} |\{ j\,|\, u_j \subseteq v_i\}| 
\le \sum^n_{i=1} C_t |v_i|^t \\
&\le C_t \left( \sum^n_{i=1} |v_i|^s \right)^{1/\min\{1,s/t\}}
\le C_t (cN)^{1/\min\{1,s/t\}}.
 \end{split}
\end{equation*}
Hence we obtain for $S:= \lceil C_t(cN)^{1/\min\{1,s/t\}} \rceil$ 
and all $p_\omega > {\rm decay}_{{\bf \gamma}, \omega}$
\begin{equation*}
 e(Q,B_{{\bf \gamma}})^2 \ge \sum^\infty_{j=S+1} \widehat{\gamma}_{u_j} 
= \Omega( S^{1-p_\omega} )
= \Omega \left( N^{\frac{1-p_\omega}{\min\{1,s/t\}}} \right).
\end{equation*}
If we have general weights ${\bf \gamma}$, then we obtain for $\sigma \in\mathbb{N}$
and the cut-off weights ${\bf \gamma}^{(\sigma)}$ that 
$e(Q,B(K_{{\bf \gamma}})) \ge e(Q,B(K_{{\bf \gamma}^{(\sigma)}}))$.
From this and (\ref{alpha}) the inequality for the exponent of 
tractability follows.
Similarly as in the proof of Theorem \ref{Theorem3.3''}, the necessity
of condition (\ref{decay-bed}) is easily established.
\end{proof}

\begin{theorem}\label{Theorem3.2''}
Let $\omega\in\mathbb{N}$ be fixed.
We have for the exponent of tractability $p^{{\rm res}-\omega}_{{\rm unr}}$ in the unrestricted subspace sampling setting
\begin{equation*}
p^{{\rm res}-\omega}_{{\rm unr}}\ge \max \left\{ \frac{2}{\alpha}, \sup_{\sigma\in\mathbb{N}}
\frac{2}{{\rm decay}_{{\bf \gamma},\sigma} - 1} \right\}. 
\end{equation*}
\end{theorem}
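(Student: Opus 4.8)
The plan is to follow the proof of Theorem~\ref{Theorem3.1''} almost verbatim, replacing its Jensen-type estimate by a cruder combinatorial count that becomes tight precisely because $\omega$ is fixed. First I would fix any $Q\in\mathcal{A}^{{\rm res}-\omega}$ with ${\rm cost}_{{\rm unr}}(Q,B_{{\bf \gamma}})\le N$. By the definition of $\mathcal{A}^{{\rm res}-\omega}$ there are an $n\in\mathbb{N}$ and coordinate sets $v_1,\ldots,v_n\in\mathcal{U}$ with $|v_i|\le\omega$ for all $i$, such that the $i$th sample point of $Q$ lies in $\mathcal{X}_{v_i,a}$ and $\sum^n_{i=1}\$(|v_i|)\le N$. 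Exactly as in Theorem~\ref{Theorem3.1''}, property (\ref{vanish}) guarantees that the values of $f$ on $\bigcup_i\mathcal{X}_{v_i,a}$ coincide with those of $\Psi_{\{v_i\},a}f$, so $Q(f)=Q(\Psi_{\{v_i\},a}f)$ for all $f\in B_{{\bf \gamma}}$. Lemma~\ref{Lemma3.1''} with $\theta=1$ together with Lemma~\ref{Lemma2.7''} then gives
\[
 e(Q,B_{{\bf \gamma}})^2 \ge \sum_{j;\,\forall i\in[n]: u_j\nsubseteq v_i} \widehat{\gamma}_{u_j}.
\]

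The decisive and only genuinely new step is to bound the number of omitted indices. Since $\$(\nu)\ge 1$ for every $\nu$, the cost constraint forces $n\le N$; and since $|v_i|\le\omega$, each $v_i$ has at most $2^{|v_i|}\le 2^\omega$ subsets, so $|\{j\,|\,u_j\subseteq v_i\}|\le 2^\omega$. Summing over $i$ yields
\[
 |\{j\,|\,\exists i\in[n]: u_j\subseteq v_i\}| \le n\,2^\omega \le 2^\omega N = O(N).
\]
This replaces the factor $N^{1/\min\{1,s/t\}}$ of Theorem~\ref{Theorem3.1''} by the clean bound $O(N)$, and it is exactly here that all dependence on $s$ and $t^*_\sigma$ disappears---which is why the resulting exponent carries no $\min\{1,s/t^*_\sigma\}$ factor. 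To move from finite-order to general weights I would, for each $\sigma\in\mathbb{N}$, pass to the cut-off weights ${\bf \gamma}^{(\sigma)}$ and use $e(Q,B_{{\bf \gamma}})\ge e(Q,B_{{\bf \gamma}^{(\sigma)}})$; the count above applies unchanged, since it only uses $|v_i|\le\omega$.

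Finally I would convert the count into a rate. With $S:=\lceil 2^\omega N\rceil$ and the $\widehat{\gamma}_{u_j(\sigma)}^{(\sigma)}$ arranged non-increasingly, deleting at most $S$ terms leaves
\[
 e(Q,B_{{\bf \gamma}})^2 \ge \sum_{j=S+1}^\infty \widehat{\gamma}_{u_j(\sigma)}^{(\sigma)} = \Omega\big(S^{1-p_\sigma}\big) = \Omega\big(N^{1-p_\sigma}\big)
\]
for every $p_\sigma>{\rm decay}_{{\bf \gamma},\sigma}$, by the tail-sum/decay relation from \cite{Gne10}. Hence $e^{{\rm res}-\omega}_{{\rm unr}}(N,B_{{\bf \gamma}})=\Omega(N^{(1-p_\sigma)/2})$, which forces $p^{{\rm res}-\omega}_{{\rm unr}}\ge 2/(p_\sigma-1)$; letting $p_\sigma\downarrow{\rm decay}_{{\bf \gamma},\sigma}$ and taking the supremum over $\sigma\in\mathbb{N}$ produces the term $\sup_\sigma 2/({\rm decay}_{{\bf \gamma},\sigma}-1)$ (with the convention $2/0=\infty$, so that ${\rm decay}_{{\bf \gamma},\sigma}=1$ already rules out strong polynomial tractability). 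The $2/\alpha$ term is immediate from (\ref{alpha}), and the maximum of the two completes the argument. I expect the only real obstacle to be bookkeeping: confirming that the crude bound $n\,2^\omega$ is genuinely $O(N)$ (which rests on $\$(\nu)\ge 1$, hence $n\le N$) and that the cut-off reduction preserves the hypothesis $|v_i|\le\omega$.
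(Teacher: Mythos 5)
Your proposal is correct and follows essentially the same route as the paper: reduce to $e(Q,B_{{\bf \gamma}})^2\ge\sum_{j;\,\forall i:\,u_j\nsubseteq v_i}\widehat{\gamma}_{u_j}$ via Lemmas \ref{Lemma2.7''} and \ref{Lemma3.1''}, bound the number of captured indices by $2^{\omega}n\le 2^{\omega}N=O(N)$ using $\$(\nu)\ge 1$, and conclude $e(Q,B_{{\bf \gamma}})^2=\Omega(N^{1-p_\sigma})$ from the tail-sum estimate after passing to cut-off weights. The only difference is that you spell out the cut-off reduction and the final conversion to the exponent bound, which the paper leaves implicit by referring back to the proof of Theorem \ref{Theorem3.1''}.
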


\begin{proof}
We follow the lines of the proof of Theorem \ref{Theorem3.1''}, and use the
same notation. The difference is that  this time $Q$ selects
randomly $n$ sample points ${\bf x}_1\in \mathcal{X}_{v_1,a}, \ldots, {\bf x}_n\in 
\mathcal{X}_{v_n,a}$,
where $|v_i| \le \omega$ for all $i\in [n]$, and that we therefore can make the
estimate
$|\{j \,|\, \exists i\in [n] \,:\, u_j\subseteq v_i \}| 
\le 2^{\omega} n = O(N)$,
since $\$(|v_i|) \ge 1$ for all $i\in [n]$ by definition of the function
$\$$.
Hence we get this time for $p> {\rm decay}_{{\bf \gamma}}$ 
\begin{equation*}
 e(Q,B_{{\bf \gamma}})^2 \ge \sum_{j=2^{\omega}n+1}^{\infty} \widehat{\gamma}_{u_j} 
= \Omega(N^{1-p}).
\end{equation*}
\end{proof}

A comparison of Theorem \ref{Theorem3.1''} and \ref{Theorem3.2''} indicates that there are cost functions and classes of finite-order weights for which changing dimension algorithms cannot achieve
convergence rates that are arbitrarily close to the optimal 
rate. Let us recall that for weights of finite order $\omega$, changing dimension 
algorithms as defined in \cite[Proof of Thm.~5]{KSWW10} would only use sample points 
from sample spaces $\mathcal{X}_{u,a}$ with $|u|\le \omega$; see also the comment at the
beginning of Section~4 in \cite{KSWW10}.
Examples of such cost functions and finite-order weights would be $\$(k) = \Omega(k^s)$ and 
lexicographically-ordered weights of order $\omega >s$, see 
Section \ref{LEX}. (A similar observation was made for the deterministic setting, see 
\cite[Thm.~3.2 \& Sect.~3.2.3 ]{Gne10}.)

\subsection{Results for Specific Classes of Weights}
\label{SPECIFIC}

Here we consider some example classes of weights and show 
how our bounds from Section \ref{WEIGHTS} simplify in those settings.

\subsubsection{Product weights and finite-product weights}
\label{PRODUCT}

\begin{definition}
Let $(\gamma_j)_{j\in\mathbb{N}}$ be a sequence of non-negative real
numbers satisfying $\gamma_1\ge \gamma_2 \ge \ldots.$ With the
help of this sequence we define for $\omega\in\mathbb{N}\cup\{\infty\}$  weights
${\bf \gamma} = (\gamma_{u})_{u\in\mathcal{U}}$ by
\begin{equation}
\label{gammafpw}
\gamma_{u} =
\begin{cases}
\prod_{j\in u} \gamma_j
\hspace{2ex}&\text{if $|u| \le \omega$},\\
\,0
\hspace{2ex} &\text{otherwise,}
\end{cases}
\end{equation}
where we use the convention that the empty product is $1$.
In the case where $\omega=\infty$, we call such weights \emph{product weights},
in the case where $\omega$ is finite, we call them \emph{finite-product weights of order} $\omega$.
\end{definition}

Product weights were introduced in \cite{SW98}
and have been studied extensively since then.
Finite-product weights were considered in \cite{Gne10}. 
Observe that for $\sigma\in\mathbb{N}$ the cut-off weights ${\bf \gamma}^{(\sigma)}$ 
of product weights ${\bf \gamma}$ are finite-product weights of order
$\sigma$.

Let us assume that ${\bf \gamma}$ are product or finite-product weights.
As shown in \cite[Lemma~3.8]{Gne10}, we have 
\begin{equation}
\label{decay1=sigma}
{\rm decay}_{{\bf \gamma},1} = {\rm decay}_{{\bf \gamma},\sigma}
\hspace{2ex}\text{for all $\sigma\in\mathbb{N} \cup \{\infty\}$.}
\end{equation}
(Actually, \cite[Lemma~3.8]{Gne10} states inequality 
(\ref{decay1=sigma})
only for all $\sigma\in\mathbb{N}$. But the proof provided in \cite{Gne10}
is also valid for the case $\sigma=\infty$.)
In particular, we see that for strong polynomial tractability with
respect to the nested subspace sampling model and the class 
$\mathcal{A}^{{\rm ran}}$ or with respect to the unrestricted subspace sampling model
and the class $\mathcal{A}^{{\rm res}}$ it is necessary that ${\rm decay}_{{\bf \gamma}} = 
{\rm decay}_{{\bf \gamma},\infty} >1$.
Since $t^*_1 = 1$, we obtain from Theorem \ref{Theorem3.3''}, \ref{Theorem3.1''},
and \ref{Theorem3.2''}
\begin{equation}
\label{neslowboupw}
p^{{\rm ran}}_{{\rm nest}} \ge \max \left\{ \frac{2}{\alpha}\,,\,
\frac{2s}{{\rm decay}_{{\bf \gamma},1} - 1} \right\},
\end{equation}
and
\begin{equation}
\label{unrlowboupw}
p^{{\rm res}}_{{\rm unr}} \ge \max \left\{ \frac{2}{\alpha}\,,\,
\frac{2\min\{ 1,s \}}{{\rm decay}_{{\bf \gamma},1} - 1} \right\},
\hspace{3ex}
p^{{\rm res}-\omega}_{{\rm unr}} \ge \max \left\{ \frac{2}{\alpha}\,,\,
\frac{2}{{\rm decay}_{{\bf \gamma},1} - 1} \right\}.
\end{equation}
Note that the bounds for finite-product weights are the same as 
for product weights.

\begin{remark}
For 
\emph{product and and order-dependent (POD) weights 
$(\gamma_u)_{u\in\mathcal{U}}$},
which were recently introduced in \cite{KSS11} and are of the form
\begin{equation*}
\gamma_u = \Gamma_{|u|} \prod_{j\in u}\gamma_j,
\hspace{3ex}\text{where $\gamma_1\ge\gamma_2\ge \cdots \ge 0$, and
$\Gamma_0=\Gamma_1=1$, $\Gamma_2,\Gamma_3,\ldots \ge 0$,}
\end{equation*}
identity (\ref{decay1=sigma}) still holds; for a proof see \cite{DG12}.
Thus (\ref{neslowboupw}) and (\ref{unrlowboupw}) are
also valid for POD weights. Product and finite-product 
weights are, in particular, POD weights.
\end{remark}

Let us assume that there exist constants $c,C,\kappa >0$, 
$\alpha_1 \ge 0$,
and $\alpha_2\in [0,1]$ such that for all $\emptyset \neq u\in\mathcal{U}$ and
all $n\ge 1$ there exist randomized algorithms $Q_{n,u}$ 
using for all $f_u\in H_u$ at most $n$ function values of $f_u$ with
\begin{equation*}
 \mathbb{E} \left( |I_u(f_u) - Q_{n,u}(f_u)|^2 \right) 
\le \frac{cC^{|u|}}{(n+1)^{\kappa}} 
\left( 1 + \frac{\ln(n+1)}{(|u|-1)^{\alpha_2}} \right)^{\alpha_1(|u|-1)^{\alpha_2}}
\|f_u\|^2_{H_u}.
\end{equation*}
Note that necessarily $\kappa \le \alpha$.
Let us further assume that ${\rm decay}_{{\bf \gamma},1} >1$ and the 
cost function $\$$ satisfies $\$(\nu) = O(e^{r\nu})$ for some 
$r\ge 0$.
Plaskota and Wasilkowski proved in \cite{PW11} with the help
randomized changing dimension algorithms that
\begin{equation*}
 p^{{\rm res}}_{{\rm unr}} \le \max \left\{ \frac{2}{\kappa}, 
\frac{2}{{\rm decay}_{{\bf \gamma},1} -1} \right\}.
\end{equation*}
Hence, if $\Omega(\nu) = \$(\nu) = O(e^{r\nu})$ and $\kappa = \alpha$,
our lower bound (\ref{unrlowboupw}) is sharp and the randomized  
algorithms from \cite{PW11} exhibit essentially the optimal convergence rate.

Let us consider a more specific example, namely the case where $D=[0,1]$, $k$ is the Wiener kernel given by $k(x,y) = \min\{x,y\}$, and $\rho$ is the 
restriction of the Lebesgue measure to $D$. In this case the anchor $a$ is zero. 
The space $H(k)$ is the Sobolev space anchored at zero, and its elements are 
the absolutely continuous functions $f$ with $f(0) = 0$ and square-integrable
first weak derivative.
It is known that $\kappa = 3 = \alpha$, see \cite[Ex.~1 and Prop.~3]{WW07} (or \cite[Ex.~2]{PW11}) and 
\cite[Sect.~2.2.9, Prop.~1]{Nov88}.
Thus the upper bound from \cite{PW11} and our lower bound (\ref{unrlowboupw})
establish for 
$\Omega(\nu) = \$(\nu) = O(e^{r\nu})$ that
\begin{equation*}
 p^{{\rm res}}_{{\rm unr}} = \max \left\{ \frac{2}{3}, 
\frac{2}{{\rm decay}_{{\bf \gamma},1} -1} \right\}.
\end{equation*}
For the same specific example Hickernell et al.
showed for the case $\$(\nu) = \Theta(\nu)$ with the help of multilevel Monte Carlo algorithms that
\begin{equation*}
p^{{\rm ran}}_{{\rm nest}} \le \max \left\{ 2, \frac{2}{{\rm decay}_{{\bf \gamma},1} -1} 
\right\} 
\hspace{2ex}\text{for ${\rm decay}_{{\bf \gamma},1} >1$,}
\end{equation*}
see \cite[Cor.~5]{HMNR10}. Hence
\begin{equation*}
p^{{\rm ran}}_{{\rm nest}} = \frac{2}{{\rm decay}_{{\bf \gamma},1} -1} 
\hspace{2ex}\text{for ${\rm decay}_{{\bf \gamma},1} \in (1,2]$.}
\end{equation*}
Similarly as in the deterministic setting \cite{NHMR11, DG12} or 
in the randomized setting with underlying ANOVA-type decomposition
\cite{BG12}, our lower bound for $p^{{\rm ran}}_{{\rm nest}}$ is sharp for 
sufficiently large ${\rm decay}_{{\bf \gamma},1}$. This may be proved by using
multilevel algorithms based on the integration algorithms provided
in \cite[Sect.~4]{WW07} (cf. also \cite[Sect.~3.2]{PW11}) or on scrambled
scrambled quasi-Monte Carlo algorithms similar to those discussed in 
\cite{BD11}, but providing a rigorous proof for this claim is beyond the
scope of this article.  

\subsubsection{Finite-Intersection Weights}

We restate Definition 3.5 from \cite{Gne10}.

\begin{definition}
\label{def-fiw}
Let $\rho \in \mathbb{N}$.
Finite-order weights $(\gamma_{u})_{u\in \mathcal{U}}$ are
called \emph{finite-intersection weights} with \emph{intersection
degree} at most $\rho$ if we have
\begin{equation}
\label{fiw}
|\{v\in\mathcal{U} \, | \, \gamma_v >0 \,,\, u\cap v \neq \emptyset \}| \le 1+\rho
\hspace{2ex}\text{for all $u\in\mathcal{U}$ with $\gamma_u >0$.}
\end{equation}
\end{definition}

For finite-intersection weights of order $\omega$
it was observed in \cite{Gne10} that $t^*_\sigma = 1$ for all $\sigma \in \mathbb{N}$, resulting in the lower bounds (\ref{neslowboupw}) and (\ref{unrlowboupw}) with ${\rm decay}_{{\bf \gamma},1}$ replaced by ${\rm decay}_{{\bf \gamma},\omega}$.

\subsubsection{Lexicographically-ordered weights}
\label{LEX}

To every set $u\subset\mathbb{N}$ with $|u|=\ell$
we may assign  a word $\varphi(u) := i_1i_2\ldots i_\ell$,
where for $j\in [\ell]$
the number $i_j$ is the $j$th-largest element of $u$.
On the set of all finite words over
the alphabet $\mathbb{N}$ we have the natural lexicographical order
$\prec_{{\rm lex}}$, where by convention the empty word should be
the first (or ``smallest'') word.

\begin{definition}
We call weights ${\bf \gamma}$ \emph{lexicographically-ordered weights} of
\emph{order $\omega$} if $\gamma_\emptyset =1$, $\gamma_u >0$ for all
$u\subset \mathbb{N}$ with $|u| \le \omega$, and
\begin{equation*}
\varphi(u_i) \prec_{{\rm lex}} \varphi(u_j)
\hspace{2ex}\text{for all $i,j\in\mathbb{N}$ satisfying $i<j$.}
\end{equation*}
\end{definition}

Lexicographically-ordered weights were introduced in \cite{Gne10}. Their properties
complement the properties of the other classes of weights considered before, see 
\cite{DG12, Gne10} for more information.
For lexicographically-ordered weights of order $\omega$ we have
$t^*_\sigma = \min\{\sigma, \omega\}$. Hence we get from Theorem \ref{Theorem3.3''}, \ref{Theorem3.1''}, and \ref{Theorem3.2''}
the lower bounds
\begin{equation*}
p^{{\rm ran}}_{{\rm nest}} \ge \max \left\{ \frac{2}{\alpha}\,,\,
\frac{2s/\omega}{{\rm decay}_{{\bf \gamma},\omega} - 1} \right\},
\end{equation*}
and
\begin{equation*}
p^{{\rm res}}_{{\rm unr}} \ge \max \left\{ \frac{2}{\alpha}\,,\,
\frac{2\min\{ 1,s/\omega \}}{{\rm decay}_{{\bf \gamma},\omega} - 1} \right\},
\hspace{3ex}
p^{{\rm res}-\omega}_{{\rm unr}}\ge \max \left\{ \frac{2}{\alpha}\,,\,
\frac{2}{{\rm decay}_{{\bf \gamma},\omega} - 1} \right\}.
\end{equation*}
The lower bounds indicate that in the setting where $\omega > s$
and ${\rm decay}_{{\bf \gamma},\omega}$ is only moderate, changing dimension 
algorithms (which are algorithms of the class $\mathcal{A}^{{\rm res}-\omega}$) cannot achieve the optimal rate of convergence and can be 
outperformed by multilevel algorithms (which can exhibit a behavior similar to the lower bound for 
$p^{{\rm ran}}_{{\rm nest}}$ above). For  the 
deterministic setting and the 
Wiener kernel $k(x,y)=\min\{x,y\}$ on $D=[0,1]$ this was rigorously 
proved  in \cite{Gne10} by lower bounds for
changing dimension algorithms and
upper bounds for multilevel algorithms, see 
\cite[Thm.~3.2 \& Sect.~3.2.3 ]{Gne10}.

\subsection*{Acknowledgment}

The author gratefully acknowledges support by the
German Science Foundation (DFG) under
grant GN 91/3-1 and by the Australian Research Council (ARC).

\end{document}